\newenvironment{enumeratei}{\begin{enumerate}[\upshape (i)]}{\end{enumerate}}
\numberwithin{equation}{section}
\theoremstyle{plain}
 \newtheorem{theorem}{Theorem}[section]
 \newtheorem{lemma}[theorem]{Lemma}
\theoremstyle{definition}
 \newtheorem{definition}[theorem]{Definition}
 \newtheorem{notationdef}[theorem]{Notation and definition}
 \newtheorem{example}[theorem]{Example}
 \newtheorem{conclude}[theorem]{Concluding remark}
\newcommand \subne[1] {\textup{Sub}_{01}(#1)}
\newcommand \catb {\mathbf{POS}_{01}^{{0\textup{s}}}} 
\newcommand \sst [1] {\scriptscriptstyle #1}
\newcommand \lcat [1] {\textup{Cat}(#1)}
\newcommand \subecat[1] {\textup{CatSub}^{\sst{\textup{em}}}_{01}(#1) }
\newcommand \catprsub[1] {\textup{CatPrSub}_{01}(#1) }
\newcommand \fprinc {\textup{Princ}}
\newcommand \princ[1] {\fprinc(#1)}
\newcommand\hatnu {{\hat\nu}}
\newcommand\hatTheta {{\hat\Theta}}
\newcommand \dd[1] {{\displaystyle{#1}}}
\newcommand \zetai[2] {\zeta_{#1,#2}}
\newcommand \cg[2] {\textup{con}(#1,#2)}
\newcommand \congen[2] {\textup{con}_{#1}(#2)}
\newcommand \cgi[3] {\textup{con}_{#1}(#2,#3)}
\newcommand\alg [1] {{\mathcal #1}}
\newcommand \pairs [1] {{\textup{Pairs}^{\leq}(#1)}}
\newcommand \Quord[1]{\textup{Quord}(#1)}
\newcommand \quos[1] {\textup{quo}(#1)}
\newcommand \iquos[2] {\textup{quo}_{#1}(#2)}
\newcommand \biquos[2] {\textup{quo}_{#1}\Bigl(#2\Bigr)}
\newcommand \blokk[2] {[#1]#2}%Block of #1 by #2
\newcommand \restrict[2] {{#1\rceil_{#2}}}
\newcommand \tuple [1] {\langle #1\rangle}
\newcommand \pair [2] {\tuple{#1,#2}}
\renewcommand\phi{\varphi}
\renewcommand\emptyset{\varnothing}
\newcommand\red[1]{{\textcolor{red}{#1}}}
\newcommand \tbf [1] {\textbf{#1}} 
\newcommand \set[1] {\{#1\}}
\renewcommand\phi{\varphi}
\renewcommand\epsilon{\varepsilon}
\newcommand \zigzags {\mathcal Z}
\newcommand \vesz[1]{{#1^{\scriptscriptstyle{{{\mathord{ \blacktriangleright  }}}}}}}
\newcommand\init [1] {} %Initials of names
\newcommand\nothing [1] {}
\begin{document}
\title[Families of monotone maps by principal lattice congruences]
{Representing some families of monotone maps by  principal lattice congruences}

\author[G.\ Cz\'edli]{G\'abor Cz\'edli}
\email{czedli@math.u-szeged.hu}
\urladdr{http://www.math.u-szeged.hu/\textasciitilde czedli/}
\address{University of Szeged, Bolyai Institute, 
Szeged, Aradi v\'ertan\'uk tere 1, HUNGARY 6720}

%% Thanks
\thanks{This research was supported by
NFSR of Hungary (OTKA), grant number 
K83219}

%% Dedication 
%\dedicatory{To the memory of Andr\'as P.\ Huhn}

\subjclass {06B10\red{.\hfill September 4, 2014}}
%
%
%\subjclass[2010] 06B10 (1980-now) Ideals, congruence relations 

\keywords{principal congruence, lattice congruence, ordered set, order, poset, quasi-colored lattice, preordering, quasiordering, monotone map, categorified lattice, functor, lattice category}

\begin{abstract} For a lattice $L$ with 0 and 1, let $\princ L$ denote the ordered set of principal congruences of $L$.   For $\{0,1\}$-sublattices $A\subseteq B$ of $L$, congruence generation defines a natural map  $\princ A\to\princ B$. In this way, we obtain a small category of bounded ordered sets as objects and some 0-separating $\set{0,1}$-preserving monotone maps as morphisms such that every hom-set consists of at most one morphism.  We prove the converse:  each small category of bounded ordered set  with these properties  is representable by principal lattice congruences in the  above sense. 
\end{abstract}

\maketitle
\section{Introduction}
Motivated by the history of congruence lattice representation problem, which  reached its summit in \init{F.\ }Wehrung~\cite{wehrung} and  \init{P.\ }R\r{u}\v{z}i\v{c}ka~\cite{ruzicka},
 \init{G.\ }Gr\"atzer~\cite{ggprincl} has recently started an analogous new topic of lattice theory.  Namely, for a lattice $L$, let $\princ L=\pair{\princ L}{\subseteq}$ denote the   ordered set  of principal congruences of $L$. A congruence is \emph{principal} if it is generated by a pair $\pair ab$ of elements.
Ordered sets (also called  partially ordered sets or posets) and lattices with 0 and 1 are called \emph{bounded}. If $L$ is a bounded lattice, then $\princ L$ is a bounded ordered set.  Conversely,  by \init{G.\ }Gr\"atzer~\cite{ggprincl}, each  bounded  ordered set $P$ is isomorphic to $\princ L$ for an appropriate bounded lattice $L$.
 The ordered sets $\princ L$ of countable lattices $L$ were characterized by \init{G.\ }Cz\'edli~\cite{czgprincc}. 
In this paper, we give a simultaneous representation for  many bounded ordered sets together with some monotone maps by principal lattice congruences.

\section{Our result}  Given two bounded ordered sets, $P$ and $Q$, a map $\psi\colon P\to Q$ is called a \emph{$\set{0,1}$-preserving monotone map} if $\psi(0_P)=0_Q$, $\psi(1_P)=1_Q$, and, for all $x,y\in P$,  $x\leq_P y$ implies $\psi(x)\leq_Q \psi(y)$. If, in addition, $0_P$ is the only preimage of $0_Q$, that is, if $\psi^{-1}(0_Q)=\set{0_P}$, then we say that $\psi$ is a \emph{$0$-separating $\set{0,1}$-preserving monotone map}.
The category of bounded ordered sets with 0-separating bound-preserving monotone maps will be denoted by $\catb$. 
For a lattice $L$ and $x,y\in L$, the principal congruence generated by $\pair xy$ is denoted by $\cg xy$ or $\cgi Lxy$. If $L$ is bounded, $K$ is a sublattice of $L$, and $0_L, 1_L\in K$, then  $K$ is a $\set{0,1}$-\emph{sublattice} of $L$. In this case, the map
\begin{equation}\zetai KL\colon\princ K\to \princ L,\quad
\text{defined by}\quad \cgi Kxy\mapsto \cgi Lxy
\label{lsddZ}
\end{equation}
is clearly a 0-separating $\set{0,1}$-preserving monotone map. It is easy to see that $\zetai KL(\cgi Kxy)$ is the same as $\congen L{\cgi Kxy}$.
We shall soon see that each 0-separating $\set{0,1}$-preserving monotone map is, in a good sense, of the form \eqref{lsddZ}.

\begin{definition}\label{sldzTg} Let  $\psi$ be a morphism in  $\catb$, that is, let $\psi$ be 
0-separating $\set{0,1}$-preserving monotone map from 
a bounded ordered set
$\tuple{P_0;\leq_0}$ to another one, $\tuple{P_1;\leq_1}$. We say that $\psi$ is \emph{representable by principal lattice congruences} if  there exist a bounded lattice $L_1$,  a $\set{0,1}$-sublattice $L_0$ of $L_1$,  and order isomorphisms 
\[\text{$\xi_0\colon \tuple{P_0;\leq_0} \to\tuple{\princ {L_0};\subseteq }$\quad and\quad $\xi_1\colon \tuple{P_1;\leq_1} \to\tuple{\princ {L_1};\subseteq }$}
\]
such that $\psi=  \xi_1^{-1}\circ \zetai{L_0}{L_1}\circ \xi_0$; that is, the diagram
\begin{equation}
\begin{CD}
\tuple{P_0;\leq_0}  @>{\dd\psi}>> \tuple{P_1;\leq_1 }\\
@V{\dd{\xi_0}}VV   @A{\dd{\xi_1^{-1}}}AA   \\
\tuple{\princ {L_0};\subseteq } @>{\dd{\zetai {L_0}{L_1}}}>>  \tuple{\princ{ L_1};\subseteq }
\end{CD}\\ \label{uuvkdhHnf}
\end{equation}
is commutative. (We compose maps from right to left, that is, $(\psi_0\circ \psi_1)(x)= \psi_0(\psi_1(x))$.) 
\end{definition}

We know from  Cz\'edli~\cite{czgmonprinc}  that each morphism of $\catb$ is representable by principal lattice congruences.  Now, we are going to deal with many morphisms of $\catb$ simultaneously; this is motivated by the simple observation that  $L_1$ above has many $\set{0,1}$-sublattices $L_i$ in general, and  we obtain a lot of 
0-separating $\set{0,1}$-preserving monotone maps of the form $\zetai {L_i}{L_j}$. 
To formulate exactly what we intend to do, the most economic  way is to use the rudiments of category theory.

\begin{notationdef}\ 
\begin{enumeratei}
\item
As usual, we often consider an ordered set $S=\tuple{S;\leq}$ as a small category. This category, denoted by  $\lcat S$, consists of the elements of $S$ as objects, and pairs belonging to the ordering relation $\leq$ as morphisms. 
\item
For a bounded lattice $L$, let $\subne L$ denote the lattice of all $\set{0,1}$-sublattices of $L$, ordered by set inclusion. Also, 
let $\subecat L$ denote the category whose objects are the $\set{0,1}$-sublattices of $L$ and the morphism are the inclusion maps. That is, for $J,K\in \subne L$, we have a unique morphism $\iota_{J,K}\colon   J\to K$, acting identically on $J$, if $J\subseteq K$, and we have no  $J\to K$ morphism otherwise.  
\item For $S$ and $L$ as above, a functor $E\colon \lcat S\to \subecat L$ is an \emph{embedding functor} if it is an order-embedding in the usual sense. That is, if $E$ is a functor and, for $s,t\in S$, $E(s)\subseteq E(t)$ if{}f $s\leq t$. 
\item  Associated with $L$,  we also have to consider the  category, $\catprsub L$, whose set of objects is 
$\set{ \princ K: K\in\subne L }$, and whose morphisms are the $\zeta_{J,K} \colon \princ J\to \princ K$, given in \eqref{lsddZ}. Note that there is at most one $J\to K$ morphism; there is one if{}f $J\subseteq K$. 
\item In this way, $\fprinc$ becomes a functor from $\subecat L$ to  $\catprsub L$ in the natural way.  For $J\subset K$, the $\fprinc$ image of this inclusion will be denoted by $\zeta_{J,K}$ rather than $\princ{J\subseteq K}$.
\end{enumeratei} 
\end{notationdef}

\begin{definition}
Let $S$ be a bounded ordered set and let $F\colon \lcat S\to \catb$ be a functor.  We say that $F$ is \emph{representable by principal lattice congruences} if there exist  a bounded lattice $L$
such that $F$ is naturally isomorphic (also called naturally equivalent) to the functor $\fprinc\circ E$.
\end{definition}

Our main result is the following.

\begin{theorem}\label{thmmain} For every  bounded ordered set $S$,  every functor \[F\colon \lcat S\to \catb\] is  representable by principal lattice congruences. 
\end{theorem}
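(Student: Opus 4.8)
The plan is to produce a single bounded lattice $L$ together with an order-embedding of $S$ into the lattice $\subne L$ of its $\set{0,1}$-sublattices, realizing the whole diagram $F$ at once. Concretely, I want a family $\set{E(s):s\in S}$ of $\set{0,1}$-sublattices of $L$ with $E(s)\subseteq E(t)$ if and only if $s\leq t$, together with order isomorphisms $\xi_s\colon F(s)\to\princ{E(s)}$ that are natural in $s$, meaning that for each $s\leq t$ the square formed by $F(s\leq t)$, $\zetai{E(s)}{E(t)}$, $\xi_s$ and $\xi_t$ commutes. Declaring $E$ to be this assignment on objects and the inclusions on morphisms makes $E\colon\lcat S\to\subecat L$ an embedding functor, and naturality of the $\xi_s$ is exactly a natural isomorphism $F\cong\fprinc\circ E$. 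Since only the sublattices $E(s)$ and the inclusions among them enter $\fprinc\circ E$, I may as well take $L=E(1_S)$, so the target reduces to a lattice $L$ with $\princ L\cong F(1_S)$ and a descending family of $\set{0,1}$-sublattices indexed by $S$.

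First I would recast the problem in the language of quasi-colored lattices, following Cz\'edli~\cite{czgmonprinc,czgprincc}. For a lattice $K$ built by the standard gadget method, the poset $\princ K$ is governed by a quasi-coloring $\gamma_K$ assigning to the relevant intervals of $K$ elements of a quasiordered set whose associated poset (its quotient by mutual comparability) is $\princ K$; moreover, for a $\set{0,1}$-sublattice $J\subseteq K$, the induced map $\zetai JK$ is, on the level of colors, simply the transport of the color of a pair $\pair{a}{b}$ with $a,b\in J$ from $\cgi{J}{a}{b}$ to $\cgi{K}{a}{b}$. In these terms the diagram $\fprinc\circ E$ is a coherent system of quasi-colorings living on one lattice, and the functor $F$ supplies the colors: each bounded poset $F(s)$ is a quasiordered (indeed ordered) set, and each monotone map $F(s\leq t)$ prescribes how colors of $E(s)$ must be read inside $E(t)$.

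Next I would carry out the construction globally, since $S$ may be infinite and no induction on $S$ is available. Using the greatest element $1_S$, I would first represent the top poset $F(1_S)$ as $\princ L$ by the construction of Gr\"atzer~\cite{ggprincl} in the refined, quasi-colored form of Cz\'edli, so that each element of $F(1_S)$ arises as the congruence generated by a designated interval (a ``gadget'') of $L$. For each $s\in S$ I would then carve out $E(s)$ as the $\set{0,1}$-sublattice of $L$ generated by the gadgets and connecting pieces that encode $F(s)$ together with the transition map $F(s\leq 1_S)$; the hypotheses that every $F(s\leq t)$ is $\set{0,1}$-preserving and $0$-separating are exactly what should guarantee that such an $E(s)$ is a genuine $\set{0,1}$-sublattice and that $0$ does not collapse, i.e.\ that $\cgi{E(s)}{a}{b}$ is nonzero precisely when $\cgi{L}{a}{b}$ is. The single-map theorem of Cz\'edli~\cite{czgmonprinc} is the case where $S$ is the two-element chain and furnishes the basic gadget realizing one map; the present construction must fit all of these gadgets together inside one $L$.

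The main obstacle is the \emph{simultaneous} coherence of this family, which splits into three demands that must be met at once. First, $s\mapsto E(s)$ must be an order-embedding: for incomparable $s,t$ the sublattices $E(s)$ and $E(t)$ must be genuinely non-nested, which I expect to enforce by equipping each $E(s)$ with private generators absent from $E(t)$ unless $s\leq t$. Second, and hardest, the fixed ambient lattice $L$ must create no spurious principal congruences on any $E(s)$: one has to show that $\princ{E(s)}$ is exactly $F(s)$ for every $s$ simultaneously, whereas passing to a sublattice can both merge and split congruence classes. This uniform ``no extra congruence, no accidental collapse'' control is where the quasi-coloring bookkeeping does the real work and is the heart of the proof. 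Third, naturality of the $\xi_s$ must be checked; I expect this to be the most formal step, since the cocycle identity $F(t\leq u)\circ F(s\leq t)=F(s\leq u)$ translates directly into the commuting squares once one uses that $\fprinc$ is a functor on $\subecat L$, so that $\zetai{E(t)}{E(u)}\circ\zetai{E(s)}{E(t)}=\zetai{E(s)}{E(u)}$. I would verify this compatibility last, after the colorings have been pinned down.
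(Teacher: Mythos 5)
Your target---one bounded lattice $L$, sublattices $E(s)$, and natural order isomorphisms $\xi_s\colon F(s)\to\princ{E(s)}$, with $L=E(1_S)$---is exactly the paper's, but the order of construction you propose is where the argument breaks: you cannot first represent $F(1_S)$ as $\princ L$ and then carve the $E(s)$ out of that lattice. A morphism $\psi_{s1}=F(s\leq 1_S)$ of $\catb$ is only required to be $0$-separating, not injective. So if $p\neq q$ in $F(s)$ satisfy $\psi_{s1}(p)=\psi_{s1}(q)$, then $E(s)$ must contain two \emph{distinct} pairs that generate distinct principal congruences of $E(s)$ but whose images under $\zetai{E(s)}{L}$ coincide. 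A lattice $L$ built from $F(1_S)$ alone carries one designated gadget per element of $F(1_S)$, so ``the gadgets that encode $F(s)$'' via $\psi_{s1}$ are literally the same pair for $p$ and for $q$; no sublattice generated by them can separate the two congruences, and $\princ{E(s)}\cong F(s)$ becomes unattainable. The cure---and this is precisely what the paper does---is to build $L$ from the \emph{whole diagram} at once, bottom-up rather than top-down: after arranging (assumption \eqref{mdfuTw}) that the posets $P_i=F(i)$ intersect pairwise only in $\set{0,1}$, one forms for each $j$ the set $R_j=\bigcup\set{P_i:i\leq j}$ and the quasiorder $\hatnu_j$ generated by all $\nu_i$, all graphs $\psi_{ij}$, and all $\psi_{ij}^{-1}$. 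Lemma~\ref{kappalemma} shows that $\tuple{R_j/\hatTheta_j;\hatnu_j/\hatTheta_j}\cong\tuple{P_j;\nu_j}$, so the lattice $\hat L_j$ obtained from this quasiordered set by the strong-auxiliary-structure machinery (Lemma~\ref{xilemma} together with Example 2.2 and Lemma 5.3 of \cite{czgprincc}, applied in stages to produce nested lattices) has a \emph{private} gadget $\pair{\hat\delta_j(p)}{\hat\epsilon_j(p)}$ for every element $p$ of every $P_i$ with $i\leq j$; the merging forced by non-injective $\psi_{ij}$ is effected by the quasiorder $\hatnu_j$, never by identifying gadgets. This same device delivers the nesting $\hat L_j\subseteq\hat L_k$ for $j\leq k$ and the uniform ``no spurious congruences'' control that you correctly identify as the heart of the matter but leave unresolved.

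A second, smaller gap: naturality of the $\xi_s$ is not the formal consequence you claim. From the commuting squares for $s\leq 1_S$ and $t\leq 1_S$ and the identity $\zetai{E(s)}{E(1_S)}=\zetai{E(t)}{E(1_S)}\circ\zetai{E(s)}{E(t)}$ you can only conclude that $\zetai{E(t)}{E(1_S)}\circ\zetai{E(s)}{E(t)}\circ\xi_s=\zetai{E(t)}{E(1_S)}\circ\xi_t\circ\psi_{st}$, and you cannot cancel $\zetai{E(t)}{E(1_S)}$ because it is not injective (again: morphisms of $\catb$ are only $0$-separating). So the intermediate squares need a direct proof. In the paper this is easy but not vacuous: the construction guarantees $\hat\delta_j=\restrict{\hat\delta_k}{R_j}$ and $\hat\epsilon_j=\restrict{\hat\epsilon_k}{R_j}$, whence both routes around the square \eqref{vkdhHnf} send $p\in P_j$ to $\cgi{\hat L_k}{\hat\delta_k(p)}{\hat\epsilon_k(p)}$.
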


\begin{example}To show the strength of this theorem and  to help in understanding it,  now we derive the result of Cz\'edli~\cite{czgmonprinc}, mentioned above. Let $\psi\colon\tuple{P_0;\leq_0}\to\tuple{P_1;\leq_1}$ be a morphism in $\catb$. We want to show that $\psi$ is representable by principal congruences in the sense of Definition~\ref{sldzTg}.  Let $S=\set{0,1}$ be the 2-element lattice, and define a functor $F$ by $F(i)=\tuple{P_i;\leq_i}$, for $i\in S=\set{0,1}$, and $F(0<1)=\psi$. By Theorem~\ref{thmmain}, we have a natural isomorphism $\xi \colon F\to \fprinc \circ E$. This means that, for $i\in S$, $\xi(i)\colon F(i)\to 
(\fprinc \circ E)(i)=\fprinc(E(i))$ is an isomorphism in $\catb$, and that the diagram
\begin{equation}
\begin{CD}
F(0)=\tuple{P_0;\leq_0}  @>{\dd\psi}>> F(1)= \tuple{P_1;\leq_1 }\\
@V{\dd{\xi(0)}}VV   @V{\dd{\xi_1}}VV   \\
(\fprinc\circ E)(0)=  \tuple{\princ {L_0};\subseteq } @>{\dd{\zetai {L_0}{L_1}}}>>  (\fprinc\circ E)(1)=\tuple{\princ {L_1};\subseteq }
\end{CD}\\ \label{vkuuHnf}
\end{equation}
commutes. Here $L_i$ stands for $E(i)$. Comparing  \eqref{uuvkdhHnf} and \eqref{vkuuHnf}, we obtain that $\psi$ is representable by principal congruences.
\end{example}

\section{Lemmas and proofs}
Due to  the heavy machinery developed in \init{G.\ }Cz\'edli~\cite{czgprincc},  the proof of Theorem~\ref{thmmain}  here is  short.

A \emph{quasiordered set} is a structure $\tuple{H;\nu}$ where $H\neq \emptyset$ is a set and $\nu\subseteq H^2$ is a  \emph{quasiordering}, that is a reflexive, transitive relation, on $H$. Quasiordered sets are also called \emph{preordered sets}. 
If $g\in H$ and $\pair xg\in\nu$ for all $x\in H$, then $g$ is a \emph{greatest element} of $H$; \emph{least elements} are defined dually. They are not necessarily unique; if they are, then they are denoted by $1_H$ and $0_H$. Given $H\neq \emptyset$, the set of all quasiorderings on $H$ is denoted by $\Quord H$. It is a complete lattice with respect to set inclusion. Therefore, for $X\subseteq H^2$, there exists a least quasiordering on $H$ that includes $X$; it  is denoted by $\iquos HX$ or  $\quos X$. 
%We write $\quo ab$ rather than $\quos{\set{\pair ab}}$.
%
If $\tuple{H;\nu}$ is a quasiordered set, then $\Theta_\nu=\nu\cap\nu^{-1}$ is 
%known to be 
an equivalence relation, and the definition $\pair{\blokk x{\Theta_\nu}}{ \blokk y{\Theta_\nu}}\in \nu/\Theta_\nu  \iff \pair xy\in\nu$  turns the quotient set $H/\Theta_\nu$ into an ordered set $\tuple{H/\Theta_\nu;\nu/\Theta_\nu}$. 
For an ordered set $H$ and  $x,y\in H$, $\pair xy$ is called an \emph{ordered pair} of $H$ if $x\leq y$. This notation fits   to previous work 
on (principal) lattice congruences.   The set of ordered pairs of $H$ is denoted by $\pairs H$. 

We need the concept of strong auxiliary structures from \init{G.\ }Cz\'edli~\cite{czgprincc}; however, we do not need all the details.
%, which take more than a page in \cite{czgprincc}. 
In particular, the reader does not have to know what the axioms (A1),\dots,(A13) are. By a  \emph{strong auxiliary structure} we mean a structure
\begin{equation}
\alg L=\tuple{L;\gamma, H,\nu,\delta ,\epsilon,\zigzags}\label{auxstr}
\end{equation}
such that the axioms (A1),\dots,(A13) from  \cite{czgprincc} hold. What we only have to know is the following. If $\alg L$ in \eqref{auxstr} is a strong auxiliary structure, then 
$L$ is a bounded lattice, $\tuple{H;\gamma}$ is a quasiordered set, $\gamma\colon \pairs L\to H$ is a map (called quasi-coloring), $\delta$ and $\epsilon$ are maps from $H$ to $L\setminus\set{0_L, 1_L}$, $\delta(p)\preceq \epsilon(p)$ for all $p\in H$, and $\zigzags$ is a set of certain 9-tuples of $L$. 
The following statement follows trivially from (A1), (A4) and the (short) proof of  Lemma~2.1 in \cite{czgprincc}.

\begin{lemma}\label{xilemma}
 If $\alg L$ in \eqref{auxstr} is a strong auxiliary structure, then the map
\[\mu\colon \tuple{H/\Theta_\nu;\nu/\Theta_\nu}\to\tuple{\princ L;\subseteq}\text{, defined by } \blokk p{\Theta_\nu} \mapsto\cgi L {\delta(p)}{\epsilon(p)},
\]
is an order isomorphism.
\end{lemma}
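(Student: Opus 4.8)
My plan is to reduce everything to the defining property of the quasi-coloring $\gamma$, so I would begin by extracting from axioms (A1) and (A4) of \cite{czgprincc} (together with the short proof of its Lemma~2.1) two facts about the strong auxiliary structure $\alg L$ of \eqref{auxstr}. First, that $\delta$ and $\epsilon$ are \emph{sections} of $\gamma$: for every $p\in H$ we have $\pair{\delta(p)}{\epsilon(p)}\in\pairs L$ and $\gamma(\delta(p),\epsilon(p))=p$, so that $\delta,\epsilon$ select, for each color $p$, an ordered pair of $L$ whose color is $p$. Second, that $\gamma$ translates $\nu$ into congruence containment: for all $\pair xy,\pair uv\in\pairs L$,
\begin{equation}
\gamma(x,y)\mathrel\nu\gamma(u,v)\iff \cgi Lxy\subseteq\cgi Luv.
\label{lqcxi}
\end{equation}
Since the lemma asserts that $\mu$ is an order \emph{isomorphism} (not an anti-isomorphism), the orientation in \eqref{lqcxi} is in fact forced, and I would fix it once and for all before anything else.

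Next I would record, for $p,q\in H$, the chain of equivalences obtained by feeding the sections into \eqref{lqcxi}:
\begin{equation}
p\mathrel\nu q \iff \gamma(\delta(p),\epsilon(p))\mathrel\nu\gamma(\delta(q),\epsilon(q)) \iff \cgi L{\delta(p)}{\epsilon(p)}\subseteq\cgi L{\delta(q)}{\epsilon(q)}.
\label{lqcyi}
\end{equation}
Reading \eqref{lqcyi} with both $p\mathrel\nu q$ and $q\mathrel\nu p$, i.e.\ with $\pair pq\in\Theta_\nu$, yields $\cgi L{\delta(p)}{\epsilon(p)}=\cgi L{\delta(q)}{\epsilon(q)}$; the forward direction says that $\mu$ is well defined on $\Theta_\nu$-blocks and the backward direction that $\mu$ is injective. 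Reading \eqref{lqcyi} with $p\mathrel\nu q$ alone, i.e.\ with $\blokk p{\Theta_\nu}\leq\blokk q{\Theta_\nu}$ in $\nu/\Theta_\nu$, shows simultaneously that $\mu$ is order-preserving and order-reflecting. Hence $\mu$ is an order embedding.

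Finally I would verify surjectivity onto $\princ L$. Every principal congruence of $L$ has the form $\cgi Lab$ for some $a,b\in L$, and since $\cgi Lab=\cgi L{a\wedge b}{a\vee b}$ I may assume $\pair ab\in\pairs L$. Putting $p:=\gamma(a,b)\in H$ and using the section property, I get $\gamma(a,b)=p=\gamma(\delta(p),\epsilon(p))$, whence \eqref{lqcxi} (applied in both directions, via reflexivity of $\nu$) gives $\cgi Lab=\cgi L{\delta(p)}{\epsilon(p)}=\mu(\blokk p{\Theta_\nu})$. Thus $\mu$ is a bijective order embedding, that is, an order isomorphism. I do not expect a genuine obstacle in this lemma, since all the real content is carried by \eqref{lqcxi}, whose three ``iff'' bodies in \eqref{lqcyi} are exactly well-definedness/injectivity, order-preservation, and order-reflection; the heavy lifting lives in the cited axioms (A1), (A4). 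The only points needing care are keeping the orientation of \eqref{lqcxi} consistent throughout and, in the surjectivity step, replacing an arbitrary generating pair $\pair ab$ by the ordered pair $\pair{a\wedge b}{a\vee b}$ before \eqref{lqcxi} may legitimately be invoked.
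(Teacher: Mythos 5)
Your proposal is correct and takes essentially the same route as the paper, whose entire proof consists of citing (A1), (A4) and the short proof of Lemma~2.1 in \cite{czgprincc}: your two extracted facts --- the section property $\gamma(\delta(p),\epsilon(p))=p$ (this is (A4)) and the equivalence between $\nu$ on colors and containment of the corresponding principal congruences (this is the quasi-coloring condition packaged in (A1)) --- are exactly those ingredients, correctly oriented. The remaining bookkeeping (well-definedness and injectivity from the two-sided reading, order-preservation and order-reflection from the one-sided reading, and surjectivity via $\cgi Lab=\cgi L{a\wedge b}{a\vee b}$ and the section property) is precisely the unpacking of the cited Lemma~2.1, so there is nothing to add.
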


This lemma shows the importance of strong auxiliary structures, and it explains why we are going to construct  quasiordered sets in our proofs.

In the rest of the paper, $F\colon \lcat S\to \catb$ is a functor as in Theorem~\ref{thmmain}. To ease the notation, we will write $\tuple{P_i;\nu_i}$ (or  $\tuple{P_i;\leq_i}$) and 
$\psi_{ij}$ instead of $F(i)$ and  $F(i\leq j)$, 
respectively. We can assume that
$|P_i|\geq 2$ for all $i\in S$, because otherwise (using  that $\xi_{0j}$ and $\xi_{j1}$ are $\set{0,1}$-preserving for all $j\in S$) we obtain that all the $P_i$, $i\in S$, are 1-element, and the theorem trivially holds. The least element and the greatest element of $P_i$ are denoted by $0_i$ and $1_i$, respectively. We can also assume that 
\begin{equation}
\text{for $i\neq j\in S$, $0_i=0_j$, $1_i=1_j$, and $|P_i\cap P_j|=2$.}
\label{mdfuTw}
\end{equation}
In the opposite case, we take two new elements, $0$ and $1$, outside $\bigcup\set{P_i:i\in S}$. Let
$P_i'=(P_i\setminus\set{0_i,1_i})\cup\set{0,1}$. We define an ordering $\leq_i'$ on $P_i'$ such that the map
\[
\alpha_i\colon \tuple{P_i;\nu_i}\to \tuple{P_i';\nu_i'}, 
\text{ defined by }x\mapsto
\begin{cases}
x,&\text{if }x\in P_i\setminus\set{0_i,1_i},\cr
0&\text{if }x=0_i,\cr
1&\text{if }x=1_i.\cr
\end{cases}
\]
is an isomorphism. We let $\psi'_{ij}=  \alpha_j \circ \psi_{ij}\circ\alpha_i^{-1}$. Let $F'\colon \lcat S\to \catb$ be defined by $F'(i)= \tuple{P_i';\leq_i'}$ and $F(i\leq j)=\psi'_{ij}$. This functor is naturally isomorphic to $F$, because $\alpha\colon F\to F'$ is a natural isomorphism. Therefore, if \eqref{mdfuTw} fails, then we can work with $F'$ instead of $F$. This justifies the assumption \eqref{mdfuTw}.

For $j\in S$, let $R_j=\bigcup\set{P_i: i\leq j}$. 
Observe that  $\nu_i\subseteq R_j^2$,  $\psi_{ij}\subseteq R_j^2$ and $\psi_{ij}^{-1}=\set{\pair xy: x=\psi_{ij}(y)}\subseteq R_j^2$ for all $i\leq j$. Hence, we can let   
\[ \hatnu_j = \biquos {R_j}{\bigcup\set{\nu_i: i\leq j, \,\, i \in S}\cup \bigcup\set{\psi_{ij} : i\leq j}\cup \bigcup\set{\psi_{ij} ^{-1}: i\leq j}}\text.
\]
Let $\hatTheta_j= \hatnu_j\cap \hatnu_j^{-1}$.

\begin{lemma} \label{kappalemma} For $j\in S$, the map $\kappa_j\colon \tuple{ R_j/\hatTheta_j;\hatnu_j/ \hatTheta_j}\to \tuple{P_j;\nu_j}$, defined by
\begin{equation} \kappa_j(\blokk x{\hatTheta_j})=\begin{cases}
x,&\text{if } x\in P_j,\cr
\psi_{ij}(x),&\text{if } x\in P_i,
\end{cases} \label{dkdgjhT}
\end{equation}
is an order isomorphism. $($Note that the first line of \eqref{dkdgjhT} can be omitted, since $\psi_{jj}$ is the identity map.$)$
\end{lemma}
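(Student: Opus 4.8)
The plan is to realize the class map \eqref{dkdgjhT} as one induced by a single map on representatives, and then to pin down $\hatnu_j$ completely through that map. First I would introduce $\lambda_j\colon R_j\to P_j$ by $\lambda_j(x)=\psi_{ij}(x)$ whenever $x\in P_i$ with $i\leq j$, so that $\kappa_j(\blokk x{\hatTheta_j})=\lambda_j(x)$. This $\lambda_j$ is well defined on $R_j$: by \eqref{mdfuTw} the only elements lying in more than one $P_i$ are $0$ and $1$, and since every $\psi_{ij}$ is $\set{0,1}$-preserving we have $\psi_{ij}(0)=0$ and $\psi_{ij}(1)=1$ independently of $i$, while for $x\notin\set{0,1}$ there is a unique $i$ with $x\in P_i$. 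Note also that $\lambda_j$ restricts to the identity on $P_j$, because $\psi_{jj}=\id$.

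The heart of the argument is the equivalence
\begin{equation*}
\pair xy\in\hatnu_j\quad\Longleftrightarrow\quad \pair{\lambda_j(x)}{\lambda_j(y)}\in\nu_j,\qquad\text{for all }x,y\in R_j.
\end{equation*}
Granting this, everything follows at once: applying it to both $\pair xy$ and $\pair yx$ forces $\lambda_j(x)=\lambda_j(y)$ by antisymmetry in $P_j$, so $\kappa_j$ is well defined on $\hatTheta_j$-classes; the equivalence itself says precisely that $\kappa_j$ is an order-embedding, hence injective; and surjectivity is immediate from $P_j\subseteq R_j$ together with the fact that $\lambda_j$ restricts to the identity on $P_j$.

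For the forward implication I would use that $\hatnu_j=\iquos{R_j}{G_j}$ is the reflexive-transitive closure of the generating set $G_j=\bigcup\set{\nu_i:i\leq j}\cup\bigcup\set{\psi_{ij}:i\leq j}\cup\bigcup\set{\psi_{ij}^{-1}:i\leq j}$, so that $\pair xy\in\hatnu_j$ means $x=y$ or there is a chain $x=z_0,z_1,\dots,z_n=y$ with each $\pair{z_{k-1}}{z_k}\in G_j$. It then suffices to check that $\lambda_j$ carries every generating step to a $\nu_j$-step: if $\pair uv\in\nu_i$ then $u\leq_i v$ gives $\psi_{ij}(u)\leq_j\psi_{ij}(v)$ by monotonicity of $\psi_{ij}$, that is $\pair{\lambda_j(u)}{\lambda_j(v)}\in\nu_j$; while if $\pair uv\in\psi_{ij}$ or $\pair uv\in\psi_{ij}^{-1}$ then $\lambda_j(u)=\lambda_j(v)$ outright, since the two ends share the same $\psi_{ij}$-value. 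Transitivity of $\nu_j$ then propagates the inequality along the whole chain. The backward implication is easier: because both $\psi_{ij}$ and $\psi_{ij}^{-1}$ lie in $G_j$, each $x\in P_i$ is $\hatTheta_j$-equivalent to $\psi_{ij}(x)=\lambda_j(x)\in P_j$; so from $\pair{\lambda_j(x)}{\lambda_j(y)}\in\nu_j\subseteq\hatnu_j$ I can prepend $\pair x{\lambda_j(x)}\in\hatnu_j$ and append $\pair{\lambda_j(y)}y\in\hatnu_j$ and invoke transitivity to land $\pair xy$ in $\hatnu_j$.

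The one place demanding care is the forward implication, and specifically the observation that a $\psi_{ij}$- or $\psi_{ij}^{-1}$-step \emph{collapses} under $\lambda_j$ rather than merely producing a $\nu_j$-inequality; this is exactly what lets the chain argument close up, and it is the step I would write out most explicitly.
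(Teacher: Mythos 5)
Your proposal is correct and follows essentially the same route as the paper: your $\lambda_j$ is exactly the paper's auxiliary map $\kappa_j'$ on representatives, the forward implication is the paper's chain argument \eqref{sdhgH} (with $\psi_{ij}$- and $\psi_{ij}^{-1}$-steps collapsing to equalities), and the backward implication is the paper's \eqref{slidGuZ} via prepending $\pair{x}{\lambda_j(x)}$ and appending $\pair{\lambda_j(y)}{y}$ and using transitivity. Your explicit check that $\lambda_j$ is well defined on $P_i\cap P_k=\set{0,1}$, which the paper leaves tacit, is a small but welcome addition.
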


\begin{proof}  
Consider the auxiliary map $\kappa_j'\colon \tuple{ R_j;\hatnu_j}\to \tuple{P_j;\nu_j}$, defined by $\psi_{ij}(x)$ for $x\in P_i$.
First, we show that $\kappa_j'$ is monotone, that is, for all $x,y\in R_j$, 
\begin{equation} 
\text{if}\quad \pair xy\in\hatnu_j,\quad\text{then}\quad \pair{\kappa'_j( x )}{\kappa'_j( y)}\in\nu_j\text.
\label{sdhgH}
\end{equation}
Assume that $\pair xy\in\hatnu_j$. By the definition of $\hatnu_j$, there is an $n\in\mathbb N_0$ and there are $z_0,\dots, z_n\in R$ and  $k_1,\dots, k_n\in S$ such that     $z_0=x$, $z_n=y$, $k_1\leq j$, \dots, $k_n\leq j$, and $\pair{z_{i-1}}{z_i}\in \nu_{k_i}\cup\psi_{k_ij}\cup\psi^{-1}_{k_ij}$ for $i\in\set{1,\dots,n}$. 
If $\pair{z_{i-1}}{z_i}\in \nu_{k_i}$, then $\pair{\kappa'_j(z_{i-1})}{\kappa'_j(z_i)} = \pair{\psi_{k_ij}(z_{i-1})}{\psi_{k_ij}(z_i)}\in \nu_j$, since $\psi_{k_ij}$ is monotone.   If $\pair{z_{i-1}}{z_i}\in \psi_{k_ij}$, that is $\psi_{k_ij}(z_{i-1})=z_i$, then $\pair{\kappa'_j(z_{i-1})}{\kappa'_j(z_i)} = 
\pair{z_i}{z_i}\in \nu_j$ by reflexivity.  
Similarly, if 
$\pair{z_{i-1}}{z_i}\in \psi_{k_ij}^{-1}$, that is $\psi_{k_ij}(z_{i})=z_{i-1}$, then $\pair{\kappa'_j(z_{i-1})}{\kappa'_j(z_i)} = \pair{z_{i-1}}{z_{i-1}}\in \nu_j$. 
Thus, $\pair{\kappa'_j(z_{i-1})}{\kappa'_j(z_i)} \in\nu_j$ holds for all $i\in\set{1,\dots,n}$, and 
 $\pair{\kappa'_j(x)}{\kappa'_j(y)}\in\nu_2$ follows 
by the transitivity of $\nu_j$. This proves \eqref{sdhgH}.

Next, if $\blokk x{\hatTheta_j}=\blokk y{\hatTheta_j}$, then $\pair xy,\pair yx\in\hatnu_j$. So, \eqref{sdhgH}  and  the antisymmetry of $\nu_j$  yield that $\kappa'_j(x)=\kappa'_j(y)$. Hence, $\kappa_j$ is a map. Note the rule $\kappa_j(\blokk x{\hatTheta_j})=\kappa'_j(x)$. This, together with \eqref{sdhgH}, implies that $\kappa_j$ is monotone.  Since     $\kappa'_j$ is surjective,  so is $\kappa_j$. 
Hence, to complete the proof, it suffices to show that
\begin{equation}
\text{if}\quad \pair{\kappa_j(\blokk x{\hatTheta_j})}{\kappa_j(\blokk y{\hatTheta_j})}\in\nu_j,\quad\text{then}\quad \pair{\blokk x{\hatTheta_j}} {\blokk y{\hatTheta_j}}\in\hatnu_j/\hatTheta_j\text.
\label{slidGuZ}
\end{equation}
Assume that $\pair{\kappa_j(\blokk x{\hatTheta_j})}{\kappa_j(\blokk y{\hatTheta_j})}\in\nu_j $. This means that $\pair{\kappa'_j(x)}{\kappa'_j(y)}\in \nu_j$, and we have to show that $\pair xy\in \hatnu_j$. By the definition of $R_j$, there are $i,k\in S$ with $i\leq j$ and $k\leq j$ such that $x\in P_i$ and $y\in P_k$. We obtain $\pair{x}{\kappa'_j(x)}= \pair{x}{\psi_{ij}(x)}\in\psi_{ij}\subseteq \hatnu_j$, we already mentioned  $\pair{\kappa'_j(x)}{\kappa'_j(y)}\in \nu_j$, and we also have $\pair{\kappa'_j(y)}{y}= \pair{\psi_{kj}(y)}{y}\in\psi_{kj}^{-1}\subseteq \hatnu_j$. Hence $\pair xy\in \hatnu_j$ follows by transitivity. This proves \eqref{slidGuZ} and the lemma.
\end{proof}

\begin{proof}[Proof of Theorem~\ref{thmmain}]
For $i\in S$, let $\nu_i^-=(\set{0_i}\times P_i)\cup (P_i\times \set{1_i})$; note that 
$\tuple{P_i;\nu_i^-}$ is a modular lattice of length 2. 
Let $\alg L_i^-= \tuple{L_i^-;\gamma_i^-, P_i,\nu_i^-,\delta_i^-,\epsilon_i^-,\zigzags_i^-}$ 
denote the strong auxiliary structure defined in Example 2.2 (and Figure  5) of \cite{czgprincc}, with $\tuple{P_i;\nu_i^-}$  playing the role of $\tuple{H;\nu}$.  
Similarly, for $j\in S$, let  
$\hatnu_j^-=(\set{0_{R_j}}\times R_j)\cup (R_j\times \set{1_{R_j}})$, and let  $\hat{\alg L}_j^-= \tuple{\hat L_j^-;\hat\gamma_j^-, R_j,\hatnu_j^-,\hat\delta_j^-,\hat\epsilon_j^-,\hat{\zigzags}_j^-}$ denote the strong auxiliary structure defined in Example 2.2 (and Figure  5) of \cite{czgprincc}, with $\tuple{R_j;\hatnu_j^-}$ playing the role of $\tuple{H;\nu}$. Again,  $\tuple{R_j;\hatnu_j^-}$ is a modular lattice of length 2.
For $i\leq j\leq k \in S$,   the construction  in \cite{czgprincc} yields trivially  that $L_i^-$ is a $\set{0,1}$-sublattice of $\hat L_j^-$,   $\hat L_j^-$  is a $\set{0,1}$-sublattice of $\hat L_k^-$, 
$\delta_i^-$ is the restriction $\restrict{\hat\delta_j^-}{P_i}$  of $\hat\delta_j^-$ to $P_i$,   
 $\hat\delta_j^-  =  \restrict{\hat\delta_k^-}{R_j} $ and,  analogously,
$\epsilon_i^-  = \restrict{\hat\epsilon_j^-}{P_i}$ and  
 $\hat\epsilon_j^-  =  \restrict{\hat\epsilon_k^-}{R_j} $. 
Since  $\nu_i ^-\subseteq \nu_i$, 
we can apply \cite[Lemma 5.3]{czgprincc} so that $\tuple{P_i,\nu_i ^-,P_i,\nu_i}$ plays the role of $\tuple{H,\nu,\vesz H,\vesz \nu}$.  
In this way, we obtain a strong auxiliary structure 
$\alg L_i= \tuple{L_i;\gamma_i, P_i,\nu_i,\delta_i,\epsilon_i,\zigzags_i}$.  
Note that $L_i ^-$ is a sublattice of $L_i$, $\delta_i=\delta_i ^-$, and $\epsilon_i=\epsilon_i ^-$. Let $\hatnu_{ij} =\iquos {R_j}{\nu_i}=\nu_i\cup \hatnu_j ^-$.  
Giving the role of $\tuple{H,\nu,\vesz H,\vesz \nu}$ to $\tuple{R_j,\hatnu_j ^-,R_j,\hatnu_{ij}}$, \cite[Lemma 5.3]{czgprincc} yields a strong auxiliary structure
$\hat{\alg L}_{ij} = \tuple{\hat L_{ij};\hat\gamma_{ij},R_j,\hatnu_{ij},\hat\delta_{ij},\hat\epsilon_{ij},\hat\zigzags_{ij}}$. 
It is clear from the construction, which is described in \cite{czgprincc}, that $\hat L_j^-$ is a $\set{0,1}$-sublattice of $\hat L_{ij}$, $\hat\delta_{ij}=\hat\delta_j^- $, and  $\hat\epsilon_{ij}=\hat\epsilon_j^- $.

Finally, using  \cite[Lemma 5.3]{czgprincc} with $\tuple{R_j,\hatnu_{ij},R_j,\hatnu_j}$ in place of $\tuple{H,\nu,\vesz H,\vesz \nu}$, we obtain a strong auxiliary structure 
$\hat{\alg L}_j= \tuple{\hat L_j;\hat\gamma_j, R_j,\hatnu_j,\hat\delta_j, \hat\epsilon_j,\hat{\zigzags}_j}$. Again, the construction yields that $\hat L_{ij}$ is a $\set{0,1}$-sublattice of $\hat L_j$, $\hat\delta_j=\hat\delta_{ij}$, and $\hat\epsilon_j=\hat\epsilon_{ij}$. It also yields that, for $j\leq k$,  
\begin{equation}
\text{
 $\hat L_j$  is a $\set{0,1}$-sublattice of $\hat L_k$, $\hat\delta_j=
\restrict {\hat\delta_k}{R_j}$, and $\hat\epsilon_j=
\restrict {\hat\epsilon_k}{R_j}$.}
\label{dlkdzNtN}
\end{equation}

Now, let $L=\hat L_1$. Here $1$ is the top element of $S$, whence all the $\hat L_j$, $j\in S$, are objects of  the category $\subecat L$. For $j\in S$, we define $E(j)$ by $E(j)=\hat L_j$; we obtain from \eqref{dlkdzNtN} that $E$ is a functor.
 For $j\in S$,  we obtain from Lemma~\ref {xilemma} that 
\[\mu_j\colon \tuple{R_j/\hatTheta_j;\hatnu_j/\hatTheta_j}\to\tuple{\princ{ \hat L_j};\subseteq}\text{, defined by } \blokk p{\hatTheta_j} \mapsto\cgi {\hat L_j} {\hat\delta_j(p)}{\hat\epsilon_j(p)},
\]
is an order isomorphism. So is $\kappa_j$ by Lemma~\ref{kappalemma}.  Hence the composite map 
\[  \xi_j=  \mu_j \circ \kappa_j^{-1},\quad\text{from }
F(j)=\tuple{P_j;\nu_j}\,\, \text{ to }\,\,  (\fprinc\circ E)(j) = \tuple{\princ{ \hat L_j};\subseteq},
\]
is also an order isomorphism. To show that $\xi$, defined by $\xi(j)=\xi_j$, is a natural isomorphism from $F$ to $\fprinc\circ E$, we have to prove that, for $j\leq k\in S$, the diagram
\begin{equation}
\begin{CD}
\tuple{P_j;\nu_j}   @>{\dd\psi_{jk}}>> \tuple{P_k;\nu_k} \\
@V{\dd{\xi_j}}VV   @V{\dd{\xi_k }}VV   \\
\tuple{\princ{ \hat L_j};\subseteq} @>{\dd{\zetai {\hat L_j}{\hat L_k}}}>>  \tuple{\princ{ \hat L_k};\subseteq}
\end{CD}\\ \label{vkdhHnf}
\end{equation}
commutes. To do so, consider an arbitrary element $p\in P_j$.  Using $\hat\delta_j=\restrict{\hat\delta_k}{R_j}$ and $\hat\epsilon_j=\restrict{\hat\epsilon_k}{R_j}$,  we have that 
\begin{align}
\zetai {\hat L_j}{\hat L_k}(\xi_j(p))=\zetai {\hat L_j}{\hat L_k}\bigl( \cgi{\hat L_j}{\hat\delta_j(p)}{\hat\epsilon_j(p)}\bigr) =  \cgi{\hat L_k}{\hat\delta_k(p)}{\hat\epsilon_k(p)}\bigr)\text. 
\label{cMtsa}
\end{align}
By \eqref{dkdgjhT}  and $p\in P_j$, we have $\kappa_k(\blokk p{\hatTheta_k}) =  \psi_{jk}(p)$. This gives 
$\kappa_k^{-1}(\psi_{jk}(p) ) = \blokk p{\hatTheta_k}$. Hence,
\begin{align}
\xi_k(\psi_{jk}(p)) =  \mu_k \bigl(  \kappa_k^{-1}(\psi_{jk}(p) )    \bigr ) =  \mu_k \bigl(  \blokk p{\hatTheta_k}  \bigr )  =\cgi{\hat L_k}{\hat\delta_k(p)}{\hat\epsilon_k(p)}\bigr)\text. \label{cMtsb}
\end{align}
Thus, we conclude from \eqref{cMtsa} and \eqref{cMtsb} that \eqref{vkdhHnf} is a commutative diagram. This   proves the theorem.
\end{proof}

\begin{conclude}  Gillibert and  Wehrung~\cite{gillibertwehrung} gives a general theory how to turn a representation theorem for a single homomorphism to a representation theorem of a functor, that is, a representation theorem for many homomorphisms. However, the arrows (morphisms) in \cite{gillibertwehrung}  seem to go in the opposite direction, and we do not see how \cite{gillibertwehrung} could offer an easier approach to Theorem~\ref{thmmain}
\end{conclude}

%itt tartok


\begin{thebibliography}{99}

\bibitem{czgprincc}
    Cz\'edli, G.: 
   The ordered set of principal  congruences of a countable lattice {(version May 5, 2014)}.
    Algebra Universalis, submitted on May 7, 2013; available at
\texttt{
http://www.math.u-szeged.hu/\textasciitilde{}czedli/publ.pdf/}  ; scroll down  or use the direct link
\\
\vskip-12pt 
{\Tiny{\texttt{
http://www.math.u-szeged.hu/\textasciitilde{}czedli/publ.pdf/czedli\textunderscore{}the-ordered-set-of-principal-congruences-of-a-countable-lattice.pdf}}}

\bibitem{czgmonprinc}
        Cz\'edli, G.: Representing a monotone map by principal lattice congruences. Manuscript available at 
\texttt{
http://www.math.u-szeged.hu/\textasciitilde{}czedli/publ.pdf/}  ; scroll down  or use the direct link
\\
\vskip-12pt 
{\Tiny{\texttt{
http://www.math.u-szeged.hu/\textasciitilde{}czedli/publ.pdf/czedli\textunderscore{}monoton-01-maps-by-principal-lattice-congruences.pdf}}}

%\bibitem{czgrepres}
%    Cz\'edli, G.: 
%    Representing homomorphisms of distributive lattices as restrictions of congruences of rectangular lattices.
%    Algebra Universalis \tbf{67}, 313--345 (2012)
%
%\bibitem{dilwcollect}
%    Bogart, K. P., Freese, R., Kung, J. P. S. (editors):
%    The Dilworth Theorems. Selected papers of Robert P. Dilworth.
%    Birkh\"auser Boston, Inc., Boston, MA, 1990. xxvi+465 pp. ISBN: 0-8176-3434-7


\bibitem{gillibertwehrung}
 Gillibert, P.; Wehrung, F.:
 From objects to diagrams for ranges of functors
   

%
%\bibitem{grbypict}
%   Gr\"atzer, G.:
%   The Congruences of a Finite Lattice. A Proof-by-picture Approach.
%   Birkh\"auser, Boston (2006)


\bibitem{ggprincl}
   Gr\"atzer, G.:
   The order of principal congruences of a bounded lattice.  
   Algebra Universalis \tbf{70}, 95--105  (2013)

%
%\bibitem{grlaksersch}
%   Gr\"atzer, G., Lakser, H., Schmidt, E.T.: 
%   Congruence lattices of finite semimodular lattices. 
%   Canad. Math. Bull. \tbf{41}, 290--297 (1998)

%
%\bibitem{huhn}
%   Huhn, A. P.: 
%   On the representation of distributive algebraic lattices. III.
%   Acta Sci. Math. (Szeged) \tbf{53}, 11--18 (1989)


\bibitem{ruzicka}
   R\r{u}\v{z}i\v{c}ka, P.: 
   Free trees and the optimal bound in Wehrung's theorem.
  Fund. Math. \tbf{198}, 217--228 (2008)

%
%\bibitem{schmidtidnl}
%  Schmidt, E.T.: 
%  The ideal lattice of a distributive lattice with 0 is the congruence
%lattice of a lattice. 
%  Acta Sci. Math. (Szeged) \tbf{43}, 153--168 (1981)


\bibitem{wehrung}
   Wehrung, F.:
   A solution to Dilworth's congruence lattice problem. 
   Adv. Math. \tbf{216}, 610--625 (2007)


\end{thebibliography}
\end{document}